\definecolor{rouge}{rgb}{0.7,0,0}
\definecolor{bleu}{rgb}{0,0,0.7}
\long\def\nnfoottext#1{\insert\footins{\footnotesize
    \interlinepenalty\interfootnotelinepenalty
    \splittopskip\footnotesep
    \splitmaxdepth \dp\strutbox \floatingpenalty \@MM
    \hsize\columnwidth \@parboxrestore
   \edef\@thefnmark{}
   \edef\@currentlabel{}\@makefntext
    {\rule{\z@}{\footnotesep}\ignorespaces
      #1\strut}}}
\newtheorem{thm}{Theorem}%[section]
\newtheorem{prop}[thm]{Proposition}
\newtheorem{lm}[thm]{Lemma}
\newtheorem{cor}[thm]{Corollary}
\newtheorem{claim}[thm]{Claim}
\theoremstyle{definition}
\theoremstyle{remark}
\newtheorem*{mercis}{Acknowledgments}
\numberwithin{remark}{section}
\theoremstyle{plain}
\def\ad{\operatorname {ad}}
\DeclareMathAlphabet{\calptmx}{OMS}{ztmcm}{m}{n}
\newcommand{\K}{{\Bbbk}}
\newcommand{\Z}{\mathbb{Z}}
\newcommand{\N}{\mathbb{N}}
\newcommand{\g}{\mathfrak{g}}
\newcommand{\h}{\mathfrak{h}}
\newcommand{\kk}{\mathfrak{k}}
\newcommand{\rr}{\mathfrak{r}}
\newcommand{\bb}{\mathfrak{b}}
\newcommand{\nf}{\mathfrak{n}}
\newcommand{\wfr}{\mathfrak{w}}
\newcommand{\CC}{\mathfrak{C}}
\newcommand{\gl}{\mathfrak{gl}}
\newcommand{\sld}{\mathfrak{sl}_2}
\newcommand{\NN}{\mathcal{N}}
\newcommand{\pr}{\mathrm{pr}}
\newcommand{\gnq}{>}
\begin{document}

\title{
\vspace{-4cm} Bases fortement nilpotentes et $n$-uplets des alg\`ebres de Borel\\ Very nilpotent basis and $n$-tuples in Borel subalgebras}
\author{{\sc Micha\"el~Bulois}
\thanks{{\url{michael.bulois@univ-angers.fr}}, +33 6 33 12 04 91, LAREMA, 2 boulevard Lavoisier, 49045 Angers CEDEX 1, France.}}
%%%%%%%%%%%%%%%%%%%%%%%%%%%%%%%%%%%%%%%%%%%%%%%%%%%
\date{}

\maketitle

\nnfoottext{Universit\'e d'Angers,  Laboratoire de Math\'ematiques d'Angers (LAREMA), UMR 6093.}
\vspace{-0.5cm}

{\selectlanguage{french}
\begin{abstract}
Une base (d'espace vectoriel) $B$ d'une alg\`ebre de Lie est dite fortement nilpotente si tous les crochets it\'er\'es des \'el\'ements de  $B$ sont nilpotents.
Dans cette note, on d\'emontre une version am\'elior\'ee du th\'eor\`eme d'Engel. On montre qu'une alg\`ebre de Lie admet une base fortement nilpotente si et seulement si c'est une alg\`ebre nilpotente.  
Lorsque  $\g$ est une alg\`ebre de Lie semi-simple, ceci nous permet de d\'efinir un id\'eal de $S((\g^n)^*)^G$ dont l'ensemble alg\`ebrique associ\'e dans $\g^n$ est l'ensemble des $n$-uplets vivants dans une  m\^eme sous-alg\`ebre de Borel. 
\end{abstract}}

\begin{abstract}
A (vector space) basis $B$ of a Lie algebra is said to be very nilpotent if all the iterated brackets of elements  of $B$ are nilpotent.
In this note, we prove a refinement of Engel's Theorem. We show that a Lie algebra has a very nilpotent basis if and only if it is a nilpotent Lie algebra.
When $\g$ is a semisimple Lie algebra, this allows us to define an ideal of $S((\g^n)^*)^G$ whose associated algebraic set in $\g^n$ is the set of $n$-tuples lying in a same Borel subalgebra. 
\end{abstract}

Rubriques: Alg\`ebres de Lie/ G\'eom\'etrie alg\'ebrique

\section{Introduction and Notation}
Let $\g$ be a Lie algebra defined over an algebraically closed field $\K$ of characteristic~$0$.
The adjoint action of $z\in \g$: $x\mapsto[z,x]$ is denoted by $\ad_z\in \gl(\g)$.
If $h\in\g$, we denote the centralizer of $h$ in $\g$ by $\g^h$.  
When $V$ is a vector space over $\K$, and $A\subset V$,  $\langle A\rangle$ stands for the linear subspace spanned by $A$. The symmetric algebra on $V$ is denoted by $S(V)$. Any subset $J\subset S(V^*)$ defines an algebraic subset $\mathcal V(J):=\{x\in V\mid f(x)=0, \,\forall f\in J\}$.

For $n\in \N^*$, let $\mathcal I_n$ be the set of morphisms $\g^n\rightarrow \g$ defined by induction as follows:
\begin{itemize}
\item For $i\in [\![1,n]\!]$, $\big((y_1,\dots y_n)\mapsto y_i\big)\in \mathcal I_n$.
\item If $f,g\in \mathcal I_n$, then $[f,g]:=\big((y_1,\dots y_n)\mapsto [f(y_1,\dots,y_n),g(y_1,\dots,y_n)]\big)\in~\mathcal I_n$.
\end{itemize}
In particular, $\mathcal I(y_1,\dots,y_n):=\{f(y_1,\dots,y_n)\mid f\in \mathcal I_n\}$ is the set of iterated brackets in $y_1,\dots,y_n$. One defines the depth map on $\mathcal I_n$ by induction:
$$ dep\; \big((y_1,\dots y_n)\mapsto y_i\big)=1, \qquad dep\;[f,g]=max\{dep\; f,dep\; g\}+1.$$
We say that $(y_1,\dots,y_n)$ is a \emph{very nilpotent} basis of $\g$ if the following two conditions hold:
\begin{itemize}
\item $(y_1,\dots,y_n)$ is a basis of the vector space $\g$,
\item$\ad_z$ is nilpotent in $\gl(\g)$ for any $z\in \mathcal I(y_1,\dots, y_n)$.
\end{itemize}
The key result of this note is:
\begin{prop}\label{propcentral}
$\g$ has a very nilpotent basis if and only if $\g$ is nilpotent.
\end{prop}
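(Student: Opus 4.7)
The forward direction of Proposition~\ref{propcentral} is immediate from Engel's theorem: in a nilpotent Lie algebra every element is ad-nilpotent, so any vector space basis is automatically very nilpotent, since every iterated bracket lies in $\g$ and is therefore ad-nilpotent. The content is entirely in the converse.

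For the converse, my plan is to assume $(y_1,\dots,y_n)$ is a very nilpotent basis of $\g$ and to consider the set
\[
M := \{\ad_z \mid z \in \mathcal{I}(y_1,\dots,y_n)\} \subset \End(\g).
\]
By hypothesis each element of $M$ is a nilpotent operator, and the identity $[\ad_z,\ad_w]=\ad_{[z,w]}$ combined with the fact that $\mathcal{I}(y_1,\dots,y_n)$ is stable under the bracket shows that $M$ is closed under the operator commutator. Thus $M$ is a weakly closed set of nilpotent operators in $\End(\g)$. I would then invoke Jacobson's theorem on weakly closed sets: the associative subalgebra $\mathcal{A} \subset \End(\g)$ generated by $M$ is a nilpotent algebra, so $\mathcal{A}^N = 0$ for some integer $N$.

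The proof would then conclude as follows. Since $(y_1,\dots,y_n)$ is a basis, $\ad(\g)$ coincides with the linear span of $\ad_{y_1},\dots,\ad_{y_n}$ and is therefore contained in $\mathcal{A}$. Consequently $\ad_{x_1}\cdots\ad_{x_N}$ vanishes identically on $\g$ for all $x_1,\dots,x_N\in\g$, and evaluating on an arbitrary $x_{N+1}\in\g$ forces the left-nested bracket $[x_1,[x_2,\dots,[x_N,x_{N+1}]\dots]]$ to be zero. This means the $(N+1)$-th term of the lower central series of $\g$ is trivial, so $\g$ is nilpotent.

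The main obstacle is Jacobson's theorem itself: Engel's classical operator form cannot be applied to the Lie subalgebra $\langle M\rangle \subset \End(\g)$ directly, because a priori the linear span of $M$ need not consist of nilpotent operators (sums of nilpotent operators are not generally nilpotent). If a self-contained argument is desired, I would prove the required refinement by induction on $\dim\g$, establishing the slightly stronger statement that any Lie algebra possessing a bracket-closed ad-nilpotent spanning set is nilpotent. The inductive step reduces to producing a nonzero central element in $\g$, i.e.\ a common kernel vector in $\g$ for $M$. This is obtained by mimicking Engel's classical proof, using the commutator-closure of $M$ in place of the Lie subalgebra structure; once such a $v$ is found it lies in $Z(\g)$ because $\mathcal{I}(y_1,\dots,y_n)$ spans $\g$, and the induction hypothesis applied to $\g/Z(\g)$ (which inherits a bracket-closed ad-nilpotent spanning set as the image of $\mathcal{I}(y_1,\dots,y_n)$) completes the argument.
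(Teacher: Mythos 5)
Your argument is correct, but it takes a genuinely different route from the paper. The paper never leaves the structure theory of characteristic zero: it first shows (by induction on dimension) that a nonzero \emph{semisimple} algebra admits no very nilpotent basis, using the characteristic grading of an $\mathfrak{sl}_2$-triple through a basis element to manufacture a smaller reductive subalgebra $\kk$ with the same property and deriving a contradiction from its semisimple central elements; it then handles general $\g$ by passing to $\g/\rr$ (forcing $\g$ solvable) and finishing with Lie's theorem to get strict upper triangularity. You instead observe that $M=\{\ad_z\mid z\in\mathcal I(y_1,\dots,y_n)\}$ is a weakly closed set of nilpotent operators in $\End(\g)$ (closed under commutator via $[\ad_z,\ad_w]=\ad_{[z,w]}$), invoke Jacobson's refinement of Engel's theorem to conclude that the enveloping associative algebra $\mathcal A$ is nilpotent, and then use that $\ad(\g)=\langle \ad_{y_1},\dots,\ad_{y_n}\rangle\subset\mathcal A$ to kill the lower central series. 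This is shorter, avoids Jacobson--Morozov, root systems and Lie's theorem entirely, and in fact proves more: it works over an arbitrary field, whereas the paper's proof genuinely uses characteristic $0$ and algebraic closedness. You also correctly identify the one point where care is needed --- Engel cannot be applied to the linear span of $M$, since that span need not consist of nilpotent operators --- and this is exactly what the weakly-closed-set theorem (Jacobson, \emph{Lie Algebras}, Ch.~II, Theorem~1) is for; citing it is legitimate. Your fallback sketch of a self-contained proof is the weakest part: Engel's classical induction uses the subalgebra (linear) structure of the acting Lie algebra, e.g.\ maximal subalgebras, so it does not transfer verbatim to a bracket-closed set that is not a subspace, and making that induction precise essentially amounts to reproving Jacobson's theorem. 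Since your main route rests on the cited theorem rather than on this sketch, the proof stands. (A cosmetic point: the forward implication needs no Engel at all --- in a nilpotent Lie algebra every $\ad_z$ is nilpotent by definition of nilpotency, so any basis is very nilpotent, as the paper notes.)
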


Proposition \ref{propcentral} can be seen as a refinement of Engel's theorem (see, e.g, \cite[19.3.6]{TY}). Its proof is rather technical and is given in section \ref{proofcentral}.

Assume now that $\g$ is semisimple.  Let $G$ be the algebraic adjoint group of $\g$ and let $p_1,\dots, p_d$ be algebraically independant homogeneous generators of $S(\g^*)^G$, the set $G$-invariant elements of $S(\g^*)$. It is well known that $\mathcal V(p_1,\dots, p_d)$ is the nilpotent cone of $\g$ (see, e.g. \cite[\S31]{TY}). Let $J_0$ be the ideal of $S((\g^n)^*)=\bigotimes_n S(\g^*)$ generated by the polynomials $p_i\circ f$ where $f\in \mathcal I_n$. We define $J$ in the same way, adding the constraint $dep\,f\geqslant 2$. We have $J\subset S((\g^n)^*)^G$. We consider the diagonal action of $G$ on $\g^n$. 
In  section \ref{proofcentral2}, we show how Proposition \ref{propcentral} implies the following Proposition.
\begin{prop}\label{propcentral2}
$$\mathcal V(J)=G.(\bb\times\dots\times\bb), \qquad \mathcal V(J_0)=G.(\nf\times\dots\times\nf)$$
where $\bb$ is any Borel subalgebra of $\g$ with nilpotent radical $\nf$.
\end{prop}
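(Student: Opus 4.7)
The plan is to prove each equality by two inclusions, using Proposition \ref{propcentral} for the nontrivial direction.

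The easy inclusions $G\cdot\bb^n\subseteq\mathcal V(J)$ and $G\cdot\nf^n\subseteq\mathcal V(J_0)$ follow from the $G$-invariance of the ideals: it suffices to check $\bb^n\subseteq\mathcal V(J)$ and $\nf^n\subseteq\mathcal V(J_0)$. For $(y_1,\dots,y_n)\in\bb^n$, every iterated bracket of depth $\geq 2$ lies in $[\bb,\bb]=\nf$, hence is nilpotent, so $p_i\circ f$ vanishes on $(y_1,\dots,y_n)$ for every $f\in\mathcal I_n$ with $dep\,f\geq 2$. For $(y_1,\dots,y_n)\in\nf^n$ every iterated bracket remains in $\nf$ and is nilpotent, handling $J_0$.

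For the hard inclusion $\mathcal V(J)\subseteq G\cdot\bb^n$, fix $(y_1,\dots,y_n)\in\mathcal V(J)$, let $\h$ be the Lie subalgebra of $\g$ generated by $y_1,\dots,y_n$, and set $\h'=[\h,\h]$. By bilinearity of the bracket and the inductive definition of $\mathcal I_n$, one has $\h=\langle\mathcal I(y_1,\dots,y_n)\rangle$ and $\h'=\langle\{f(y_1,\dots,y_n):f\in\mathcal I_n,\ dep\,f\geq 2\}\rangle$. Extract from this last spanning set a vector space basis $B$ of $\h'$. Any iterated bracket of elements of $B$ is again of the form $f(y_1,\dots,y_n)$ with $f\in\mathcal I_n$ of depth $\geq 2$, and the hypothesis together with the identification $\mathcal V(p_1,\dots,p_d)=\NN(\g)$ forces such an element to be nilpotent in $\g$, therefore ad-nilpotent on $\g$ and in particular on the $\ad$-invariant subspace $\h'$. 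Thus $B$ is a very nilpotent basis of $\h'$, and Proposition \ref{propcentral} yields that $\h'$ is nilpotent. Consequently $\h$ is solvable, and the classical fact that every solvable subalgebra of the semisimple $\g$ is contained in a Borel subalgebra gives $\h\subseteq\bb'$ for some Borel $\bb'$, whence $(y_1,\dots,y_n)\in(\bb')^n\subseteq G\cdot\bb^n$.

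For $\mathcal V(J_0)\subseteq G\cdot\nf^n$, I apply the same strategy directly to $\h$: a basis of $\h$ extracted from $\mathcal I(y_1,\dots,y_n)$ is very nilpotent thanks to the $J_0$-hypothesis on all depths, so Proposition \ref{propcentral} makes $\h$ nilpotent, hence solvable, hence contained in a Borel $\bb'$ of $\g$. Moreover the depth-$1$ members $(y_1,\dots,y_n)\mapsto y_i$ of $\mathcal I_n$ show that each $y_i$ is itself nilpotent in $\g$; since the nilpotent elements of a Borel subalgebra coincide with its nilradical, each $y_i\in\nf'$, so $(y_1,\dots,y_n)\in(\nf')^n\subseteq G\cdot\nf^n$. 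The principal technical step is the reduction to Proposition \ref{propcentral}, specifically the verification that the basis extracted from $\mathcal I(y_1,\dots,y_n)$ meets the very nilpotent requirement on $\h'$ (respectively $\h$) and not merely on $\g$; this is resolved by the stability of $\h'$ (resp. $\h$) under the relevant $\ad$-actions, and after that point the argument is routine semisimple structure theory.
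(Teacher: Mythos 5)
Your proof is correct and follows essentially the same route as the paper: for both hard inclusions you form the subalgebra generated by $(y_1,\dots,y_n)$, extract a very nilpotent basis (of that subalgebra for $J_0$, of its derived subalgebra for $J$) from the relevant subset of $\mathcal I(y_1,\dots,y_n)$, invoke Proposition~\ref{propcentral}, and conclude by standard Borel/nilradical containment. You simply spell out a few steps that the paper compresses into ``arguing along the same lines'' (notably that iterated brackets of depth-$\geq 2$ elements remain of depth $\geq 2$, and that $\ad$-nilpotency in $\gl(\g)$ restricts to the $\ad$-stable subspace $[\h,\h]$).
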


The question of finding such ideals arise naturally when one study the diagonal action of $G$ on $\times_{i=1}^n \g$. Indeed, when $n=1$, $G.\nf$ is the nilpotent cone $\mathcal N$. In the $n=2$ case, several authors pointed out nice generalizations of $\mathcal N$. Let us mention the set of \emph{nilpotent pairs} of \cite{Gi}, whose \emph{principal} elements lie in a finite number of orbits under the diagonal action of $G$. Looking at couples of commuting elements, we get the \emph{nilpotent commuting variety} $\CC^{nil}(\g):=\{(x,y)\in \NN\times \NN\mid[x,y]=0\}$ studied in \cite{Ba,Pr} and which has the nice property of being equidimensional. Finally, the \emph{nilpotent bicone}, studied in \cite{ CM}, is the affine subscheme $\mathfrak N\subset\g\times\g$ defined by the polarized polynomials $p_i(x+ty)=0, \forall t\in \K$. Its underlying set consists of pairs whose any linear combination is nilpotent. It is a non-reduced complete intersection, which contains $\CC^{nil}(\g)$ and which has $G.(\nf\times\nf)$ has an irreducible component.

\begin{mercis}
J-Y.~Charbonnel drew the author's attention to the plausibility of Proposition~\ref{propcentral} and to its importance in view of a characterization of $G.(\bb\times\bb)$ as given in Proposition~\ref{propcentral2}. The author thanks him for these remarks and for his encouragements in the writing of this note.
\end{mercis}

\section{Very nilpotent basis}
\label{proofcentral}
The aim of this section is to prove proposition \ref{propcentral}.
As a first step, we assume that $\g$ is semisimple. We are going to prove that $\g$ has no very nilpotent basis, cf. Corollary \ref{cormainstep}.  

First, we have to state some properties of the characteristic grading of a nilpotent element. Let $y$ be a nilpotent element of $\g$ and embed $y$ in a $\sld$-triple $(y,h,f)$. Consider the characteristic grading $$\g=\bigoplus_{i\in \Z} \g(h,i)$$ where $\g(h,i)=\{z\in \g\mid [h,z]=iz\}$ and
denote by $\pr_i$ the projection $\g\rightarrow\g(h,i)$ with respect to this grading. Then $\g(h,0)=\g^{h}$ is a subalgebra of $\g$, reductive in $\g$, i.e. $\ad_{\g(h,0)}(\g)$ is a semisimple representation. 

\begin{lm}\label{parabolic}
\label{reductive}
i)  An element $x\in \bigoplus_{i\geqslant 0} \g(h,i)$ is nilpotent if and only if $\pr_0(x)$ is nilpotent in $\g(h,0)$.\\
ii) $\big\langle[\g(h,i),\g(h,-i)]\big\rangle\subset\g(0,h)$ is Lie subalgebra, reductive in $\g(0,h)$. 
\end{lm}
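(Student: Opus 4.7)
For (i), my plan is a filtration argument exploiting the $h$-grading. Write $x=x_0+x_+$ with $x_0=\pr_0(x)\in\g(h,0)$ and $x_+\in \uf:=\bigoplus_{j>0}\g(h,j)$, and consider the decreasing filtration $F_k:=\bigoplus_{j\geqslant k}\g(h,j)$ of $\g$. Since $\ad_{x_0}$ preserves the grading and $\ad_{x_+}$ strictly raises it, $\ad_x$ stabilises each $F_k$, and the induced operator on the associated graded piece $F_k/F_{k+1}\simeq\g(h,k)$ is simply $\ad_{x_0}|_{\g(h,k)}$. Therefore $\ad_x$ is nilpotent on $\g$ if and only if $\ad_{x_0}$ is nilpotent on $\bigoplus_k\g(h,k)=\g$. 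Since $\g(h,0)=\g^h$ is reductive in $\g$, the Jordan--Chevalley decomposition of $x_0$ in $\g$ actually takes place inside $\g(h,0)$, so the last condition is equivalent to $x_0$ being a nilpotent element of $\g(h,0)$.

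For (ii), set $I:=\langle[\g(h,i),\g(h,-i)]\rangle$. The Jacobi identity
\[
[z,[a,b]]=[[z,a],b]+[a,[z,b]]
\]
shows that $I$ is $\ad_{\g(h,0)}$-stable (apply with $z\in\g(h,0)$, $a\in\g(h,i)$, $b\in\g(h,-i)$), and closed under its own bracket (apply with $z$ itself taken in $I$); hence $I$ is an ideal of $\g(h,0)$. I would then invoke the standard structural fact that any ideal of a reductive Lie algebra $\lf$ is itself reductive and acts semisimply on $\lf$. Concretely, writing $\g(h,0)=\zz\oplus\bigoplus_k\sfr_k$ as center plus simple ideals, $I$ must split as $I=(I\cap\zz)\oplus\bigoplus_{k\in S}\sfr_k$ for some index set $S$; the adjoint action of $I$ on $\g(h,0)$ is then trivial on the center and on $\sfr_k$ with $k\notin S$, and reduces on each $\sfr_k$ with $k\in S$ to the adjoint action of a semisimple Lie algebra on itself, semisimple by Weyl's theorem.

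Neither part appears to pose a serious technical obstacle. The main subtleties are, in (i), the bookkeeping that transfers nilpotency of $\ad_x$ to $\ad_{x_0}$ through the filtration, together with the identification of ``nilpotent in $\g(h,0)$'' with ``$\ad$-nilpotent on $\g$''; and in (ii), the invocation of the structural fact on ideals of reductive algebras. Both points ultimately reflect the Levi decomposition of the parabolic $\bigoplus_{i\geqslant 0}\g(h,i)$: the nilradical $\uf$ does not affect $\ad$-nilpotency, and everything reductive is controlled by the Levi $\g(h,0)$.
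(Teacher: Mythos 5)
Your proof is correct. Part (i) is essentially the paper's argument made explicit: the paper simply cites the fact that $\bigoplus_{i\geqslant 0}\g(h,i)$ is a parabolic subalgebra with Levi factor $\g(h,0)$, and your filtration by the $F_k$ together with the remark that the Jordan--Chevalley decomposition of $x_0$ stays in $\g^h$ is exactly the standard proof of that fact, so no real difference there. Part (ii) is where you genuinely diverge. The paper only checks (via the Jacobi identity with $z=[x_1,y_1]$) that $\kk:=\langle[\g(h,i),\g(h,-i)]\rangle$ is a subalgebra, and then obtains reductivity by a root-space computation: embedding $h$ in a Cartan subalgebra $\h$, it sandwiches $\kk$ between $\bigoplus_{\alpha\in R_{i,-i}}\g^\alpha$ and $\h\oplus\bigoplus_{\alpha\in R_{i,-i}}\g^\alpha$ with $R_{i,-i}=-R_{i,-i}$. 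You instead observe the stronger fact that $\kk$ is an \emph{ideal} of $\g(h,0)$ (Jacobi with $z\in\g(h,0)$), and then use the structure of ideals of a reductive Lie algebra, $\kk=(\kk\cap\zz)\oplus\bigoplus_{k\in S}\sfr_k$, plus Weyl's theorem; this is cleaner, avoids choosing a Cartan subalgebra and roots, and proves the more general statement that any ideal of $\g(h,0)$ is reductive in $\g(h,0)$. The one thing the paper's root-space argument buys that yours does not state explicitly is that the central elements of $\kk$ lie in $\h$, hence are semisimple in $\g$, so that $\kk$ is reductive \emph{in $\g$} --- and that is the form actually invoked in the proof of Lemma \ref{mainstep}. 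Your route recovers this with one extra sentence: $\zz(\kk)=\kk\cap\zz(\g^h)$, and $\zz(\g^h)$ is contained in any Cartan subalgebra of $\g$ containing $h$, so its elements are semisimple in $\g$. As the lemma is literally stated (reductive in $\g(0,h)$), your proof is complete.
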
 
\begin{proof}
i) This follows from the fact that $\bigoplus_{i\geqslant 0} \g(h,i)$ is a parabolic subalgebra of $\g$ having $\g(h,0)$ as Levi factor.

ii) If $i=0$, the result is straightforward.
In the following, we assume $i\neq 0$.
Embed  $h$ in a Cartan subalgebra $\h$. This gives rise to a root system $R(\g,\h)\subset\h^*$. 
Choose a fundamental basis $B$ of the root system $R(\g,\h)$ such that $h$ lies in the positive Weyl Chamber associated to $B$.

Let $[x_1,y_1], [x_2,y_2]$ be two elements of $[\g(h,i), \g(h,-i)]$. Then, 
$$\big[[x_1,y_1],[x_2,y_2]\big]=\Big[\big[[x_1,y_1],x_2\big],y_2\Big]+\Big[x_2,\big[[x_1,y_1], y_2\big]\Big].$$
Since $x_2$ and $\big[[x_1,y_1],x_2\big]$ (resp. $y_2$ and $\big[[x_1,y_1],y_2\big]$) are elements of $\g(h,i)$ (resp. $\g(h,-i)$),  the element  $\big[[x_1,y_1],[x_2,y_2]\big]$ belongs to $\langle[\g(h,i),\g(h,-i)]\rangle$.
By linearity, we deduce that $\big\langle[\g(h,i),\g(h,-i)]\big\rangle$ is a Lie subalgebra of $\g(0,h)$.

Write $R_i:=\{\alpha\in R(\g,\h)\mid \alpha(h)=i\}$. Hence $\g(h,i)=\bigoplus_{i\in R_i}\g^{\alpha}$, where $\g^\alpha$ is the root space associated to $\alpha$. 
Then, we see that \begin{equation}\label{eqdeploy}\bigoplus_{R_{i,-i}}\g^{\alpha}\subset\langle[\g(h,i),\g(h,-i)]\rangle\subset \h\oplus\bigoplus_{R_{i,-i}}\g^{\alpha},\end{equation} where $R_{i,-i}=(R_i+R_{-i})\cap R(\g,\h)=(R_i-R_i)\cap R(\g,\h)$. Since $R_{i,-i}=-R_{i,-i}$, $\big\langle[\g(h,i),\g(h,-i)]\big\rangle$ is reductive and it follows from \eqref{eqdeploy} that its central elements are semisimple in $\g$. Hence the result. 
\end{proof}

The main step of the proof of Proposition \ref{propcentral} lies in the following lemma.
\begin{lm}\label{mainstep}
If $\g$ has a very nilpotent basis, then there exists a non-zero proper Lie subalgebra $\kk\subset\g$, reductive in $\g$, and a basis $(x_1, \dots, x_p)$ of $\kk$ such that  $\ad_z$ is nilpotent in $\gl(\g)$ for each $z\in \mathcal I(x_1,\dots, x_p)$.
\end{lm}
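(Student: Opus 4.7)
The plan is to use the nilpotent basis element $y_1$ to produce a $\Z$-grading of $\g$ and take $\kk$ as (a suitable subalgebra of) the zero-graded piece $\g^h$.

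Since $y_1$ is $\ad_\g$-nilpotent and $\g$ is semisimple, $y_1$ is itself a nilpotent element of $\g$. I will embed $y_1 = e$ in an $\sld$-triple $(e, h, f)$ via Jacobson--Morozov and work with the grading $\g = \bigoplus_i \g(h, i)$. Setting $\kk := \g(h, 0)$, this subalgebra is reductive in $\g$ by the remark preceding Lemma~\ref{parabolic}, contains $h \neq 0$ so is nonzero, and is proper since $e \in \g(h, 2) \not\subseteq \kk$.

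To construct the basis, the natural approach is to pick elements from $\mathcal I(y_1, \ldots, y_n) \cap \g(h, 0)$: iterated brackets of such elements stay in $\mathcal I(y_1, \ldots, y_n)$, which is closed under bracket by its very definition, and are therefore $\ad_\g$-nilpotent by hypothesis. This reduces the task to showing that $\mathcal I(y_1, \ldots, y_n) \cap \g(h, 0)$ spans $\g(h, 0)$, or at least a nonzero reductive-in-$\g$ sub-part of it. For this I will invoke the associated cocharacter $\lambda : \K^* \to G$ which acts by $t^i$ on $\g(h, i)$. Because $\Ad(\lambda(t))$ is a Lie algebra automorphism, $\bigl(\Ad(\lambda(t))\,y_j\bigr)_j$ is a very nilpotent basis for every $t \in \K^*$. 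After a suitable rescaling $y_j \mapsto t^{a_j}\Ad(\lambda(t))\,y_j$, the limit as $t \to 0$ should lie in the closure of the set of very nilpotent bases of $\g$; since the condition that all iterated brackets be $\ad_\g$-nilpotent is Zariski-closed, it survives the passage to the limit. In the best case the limit is an $h$-homogeneous very nilpotent basis (each entry lying in a single $\g(h, i)$), and its grade-zero entries provide the sought basis of $\kk$.

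The main obstacle I expect is preserving linear independence in the limit: choosing the $a_j$ crudely --- say, the opposite of the lowest nonzero $h$-grade of each $y_j$ --- can produce linearly dependent ``leading terms''. A more careful, matroid-type selection of rescaling exponents is needed, exploiting the identity $\sum_i i\,\dim \g(h, i) = 0$ to balance rescalings across grades. Alternatively, one can reduce the number of distinct grades occurring in each $y_j$ iteratively, by a finite sequence of cocharacter rescalings combined with bracket operations, rather than homogenizing in one shot. Once a fully $h$-homogeneous very nilpotent basis of $\g$ is in hand, extracting its grade-zero component gives the basis of $\kk$ required by the lemma.
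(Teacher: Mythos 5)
The reduction via an \Striplet through $y_1$ and the grading $\g=\bigoplus_i\g(h,i)$ is the same starting point as the paper, but the heart of your argument --- producing an $h$-homogeneous very nilpotent basis as a limit of the rescaled tuples $\bigl(t^{a_j}\Ad(\lambda(t))y_j\bigr)_j$ as $t\to0$ --- has a genuine gap, and it is not merely the technical issue of choosing the exponents $a_j$ more cleverly. With only per-entry scalings, linear independence of the leading terms can fail for \emph{every} choice of exponents: already for a two-step grading $V=V_0\oplus V_1$ with $y_1=e_0+e_1$, $y_2=e_0+2e_1$ ($e_i\in V_i$), the only exponents giving nonzero limits are $a_1=a_2=0$, and both limits equal $e_0$. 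The standard remedy, namely column operations (replacing entries by linear combinations of entries, as one does when taking limits of subspaces under a cocharacter), is not available here, because very nilpotency is a property of the specific tuple and not of the subspace it spans: $\mathcal I$ of a tuple of linear combinations of the $y_j$ is not contained in $\langle\mathcal I(y_1,\dots,y_n)\rangle$, so the Zariski-closedness argument you invoke no longer applies after such an operation. Your fallback suggestions (``matroid-type selection'', iterated rescalings ``combined with bracket operations'') do not address this obstruction and are not worked out; likewise nothing is offered to show that $\mathcal I(y_1,\dots,y_n)\cap\g(h,0)$ spans anything nonzero, which was your first idea.

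For comparison, the paper avoids both difficulties by not homogenizing the basis and by not insisting on all of $\g(h,0)$. With $i_0$ the top degree, the elements $z_j:=\ad_{y_1}^{i_0}(y_j)$ lie in $\mathcal I(y_1,\dots,y_n)$ and span $\g(h,i_0)$; the brackets $[z_j,y_k]\in\mathcal I(y_1,\dots,y_n)$ lie in the parabolic $\bigoplus_{i\geqslant0}\g(h,i)$, and their projections $x_{j,k}:=\pr_0([z_j,y_k])=[z_j,\pr_{-i_0}(y_k)]$ span the smaller subalgebra $\kk=\big\langle[\g(h,i_0),\g(h,-i_0)]\big\rangle$. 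Since $\pr_0$ is a Lie algebra morphism on the parabolic that preserves and detects nilpotency (Lemma \ref{parabolic}~i)), every iterated bracket of the $x_{j,k}$ is the $\pr_0$-image of an element of $\mathcal I(y_1,\dots,y_n)$, hence nilpotent, and Lemma \ref{parabolic}~ii) ensures this smaller $\kk$ is still reductive in $\g$. That projection trick, together with settling for $\kk\subsetneq\g(h,0)$, is the missing idea; as written, your proof does not go through.
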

 \begin{proof}
Let $(y_1,\dots,y_n)$ be a very nilpotent basis of $\g$. Each $y_i$ is nilpotent.
Embed $y_1$ in a $\sld$-triple $(y_1,h,f)$ and consider the characteristic grading $$\g=\bigoplus_{i\in \Z} \g(h,i).$$
Denote by $i_0$ the highest weight in this decomposition, i.e. $\g(h,i_0)\neq\{0\}$ and $\g(h,i)=\{0\}$ for all $i\gnq i_0$. We set $\kk:=\langle[\g(h,i_0);\g(h,-i_0)]\rangle$. Since $\kk\subset\g(0,h)\neq \g$ is reductive in $\g$ (Lemma \ref{reductive}), there remains to find the basis $(x_1,\dots, x_p)$.

The endomorphism $\ad_{y_1}$ is nilpotent of order $i_0+1$ and $$(\ad_{y_1})^{i_0}:\g\rightarrow \g(h,i_0)$$ is surjective.
%Let $J\subset[\![1,n]\!]$ be the subset of index $j$ 
We define $z_j:=\ad_{y_1}^{i_0}(y_j)$ for $j\in J:=[\![1,n]\!]$.
By construction, $z_j\in \mathcal I(y_1,\dots,y_n)$ and $(z_j)_{j\in J}$ is a family spanning the vector space $\g(h,i_0)$. On the opposite side, we define $y'_k:=\pr_{-i_0}(y_k)$ for $k\in J$. The family $(y'_k)_{k\in J}$ spans the vector space $\g(h,-i_0)$.

Consider now $\ad_{z_j}:\g\rightarrow \bigoplus_{i\geqslant 0} \g(h,i)$ and define $$x_{j,k}:=\pr_0\circ \ad_{z_j}(y_k)=\ad_{z_j}\circ \;\pr_{-i_0}(y_k), \qquad j,k\in J^2.$$
The family $(x_{j,k})_{j,k}$ spans the vector space $\kk\subset\g(h,0)$. The elements $\ad_{z_j}(y_k)$ belong to $\mathcal I(y_1,\dots y_n)$. Hence, it follows from lemma \ref{parabolic} that the elements $x_{j,k}$ and there iterated brackets are nilpotent. In other words, if $(x_1,\dots, x_p)$ is a basis of $\kk$ extracted from $(x_{j,k})_{j,k\in J^2}$, then it fulfills the required properties. 
\end{proof}

\begin{cor}\label{cormainstep}
Let $\g\neq\{0\}$ be a semisimple Lie algebra, then $\g$ has no very nilpotent basis
\end{cor}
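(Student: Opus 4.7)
The plan is to proceed by induction on $\dim\g$, with Lemma~\ref{mainstep} providing the descent step. Suppose the corollary fails and let $\g\neq\{0\}$ be a semisimple Lie algebra of minimal dimension admitting a very nilpotent basis. Applying Lemma~\ref{mainstep} produces a non-zero proper subalgebra $\kk\subsetneq\g$, reductive in $\g$, together with a basis $(x_1,\dots,x_p)$ of $\kk$ whose iterated brackets are all $\ad_\g$-nilpotent. The strategy is to recognize $(x_1,\dots,x_p)$ as a very nilpotent basis of $\kk$ and to show that $\kk$ is itself semisimple, which will contradict the minimality of $\g$.

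Transferring the nilpotency condition from $\g$ to $\kk$ is immediate, since the restriction to the $\ad_\kk$-stable subspace $\kk$ of any $\ad_\g$-nilpotent endomorphism is still nilpotent. For the semisimplicity of $\kk$, decompose $\kk=\zz(\kk)\oplus[\kk,\kk]$ and write $x_i=c_i+s_i$ with $c_i\in\zz(\kk)$ and $s_i\in[\kk,\kk]$. Because $\kk$ is reductive in $\g$, the elements of $\zz(\kk)$ act on $\g$ by $\ad_\g$-semisimple endomorphisms; since $[c_i,s_i]=0$, the equality $\ad_{x_i}=\ad_{c_i}+\ad_{s_i}$ is a decomposition of $\ad_{x_i}$ into commuting semisimple and (by subtraction) nilpotent parts. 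Uniqueness of the Jordan--Chevalley decomposition then identifies $\ad_{c_i}$ with the semisimple part of the nilpotent operator $\ad_{x_i}$, so $\ad_{c_i}=0$, and the semisimplicity of $\g$ forces $c_i=0$. Hence every $x_i$ lies in $[\kk,\kk]$, and since the $x_i$ span $\kk$ one concludes $\kk=[\kk,\kk]$; thus $\kk$ is semisimple, of dimension strictly smaller than $\dim\g$, contradicting minimality.

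The hard part of the argument is the semisimplicity step: one must rule out that $\kk$ could be merely reductive with a non-trivial abelian summand. This is precisely where both structural consequences of the ``reductive in $\g$'' hypothesis are needed --- the direct-sum splitting of $\kk$ and the $\ad_\g$-semisimplicity of its centre --- and it explains why Lemma~\ref{mainstep} was designed to output a subalgebra reductive in $\g$ rather than only reductive as an abstract Lie algebra.
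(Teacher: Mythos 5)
Your overall strategy is the same as the paper's: induction on $\dim\g$, invocation of Lemma~\ref{mainstep}, restriction of the very nilpotent basis to $\kk$, and a contradiction deduced from the reductive-in-$\g$ structure of $\kk$ (the paper states it as ``some $x_i$ is not nilpotent'', you state the contrapositive ``$\kk$ is semisimple''). Both proofs ultimately rest on showing that the $\zz(\kk)$-components $c_i$ of the $x_i$ vanish.

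However, your ``by subtraction'' step is not a valid inference. From $\ad_{x_i}$ nilpotent, $\ad_{c_i}$ semisimple, and $[\ad_{c_i},\ad_{s_i}]=0$, you \emph{cannot} conclude that $\ad_{s_i}=\ad_{x_i}-\ad_{c_i}$ is nilpotent: a nilpotent operator minus a commuting semisimple one is in general not nilpotent, and in fact $\ad_{s_i}$ being nilpotent would already force $\ad_{c_i}=0$ by Jordan--Chevalley uniqueness, so the claim assumes what it is meant to prove. A concrete illustration: in $\g=\sln_3$, $x=E_{13}$ is nilpotent, $c=\mathrm{diag}(1,-2,1)$ is semisimple and commutes with $x$, yet $x-c$ is not nilpotent. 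To close the gap you must decompose $s_i$ further: write $s_i=a_i+b_i$ for the Jordan--Chevalley decomposition of $s_i$ inside the semisimple algebra $[\kk,\kk]$; since $\kk$ is reductive in $\g$, $a_i$ is $\ad_\g$-semisimple and $b_i$ is $\ad_\g$-nilpotent. Then $c_i+a_i$ is semisimple, $b_i$ is nilpotent, and all three commute, so $x_i=(c_i+a_i)+b_i$ is the Jordan--Chevalley decomposition of $x_i$ in $\g$. Nilpotency of $x_i$ gives $c_i+a_i=0$, and the directness of $\zz(\kk)\oplus[\kk,\kk]$ yields $c_i=0$. This is exactly the detail the paper leaves implicit behind its ``Therefore there must be some $i$ such that $x_i$ is not nilpotent''.
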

\begin{proof}
We argue by induction on $\dim \g$. Assume that there is no semisimple $\wfr\subset \g$ such that $0\neq\dim \wfr<\dim \g$ having a very nilpotent basis. Assume that $\g$ has one. Then, we define the reductive subalgebra $\kk$ equipped with the basis $(x_1,\dots x_p)$ as in Lemma \ref{mainstep}. By hypothesis $\kk$ is not semisimple. Hence $\kk$ has a non-trivial centre whose elements are semisimple. Therefore there must be some $i\in [\![1,p]\!]$ such that $x_i$ is not nilpotent and we get a contradiction.
\end{proof}

We are now ready to finish the {\bf proof of proposition \ref{propcentral}} in the general case. From now on, we forget the semisimplicity assumption on $\g$.

First of all, we note that whenever $\g$ is nilpotent, then any basis of $\g$ is very nilpotent. 

Conversely, assume that $\g$ is any Lie algebra having a very nilpotent basis $(y_1,\dots, y_n)$. Let
 $\rr$ be the radical of $\g$. The algebra $\g/\rr$ is semisimple and we can extract a very nilpotent basis $(x_1,\dots,x_p)$ of $\g/\rr$ from the projection of the elements $y_i$ via $\g\rightarrow \g/\rr$.
It follows from corollary \ref{cormainstep} that $\g/\rr=\{0\}$. In other words $\g$ is solvable.

Then, one may apply Lie's Theorem (see, e.g., \cite[19.4.4]{TY}) . It states that $\ad_{\g}\subset \gl(\g)$ can be seen as a subspace of a set of upper triangular matrices. In fact, the nilpotency condition on the $\ad_{y_i}$ implies that $\ad_{\g}$ is a subspace of a set of strictly upper triangular matrices. In particular, $\g$ is nilpotent. This ends the proof of Proposition \ref{propcentral}.

\section{$n$-tuples lying in a same Borel subalgebra}
\label{proofcentral2}

In this section $\g$ is assumed to be semisimple. Let $\bb$ be a Borel subalgebra of $\g$ and $\nf$ be the nilradical of $\bb$. Define two ideals of $S((\g^n)^*)^G$ by: 
$$J_0:=\big(p_i\circ f\mid i\in [\![1,d]\!], f\in \mathcal I_n\big), \quad 
J:=\big(p_i\circ f\mid i\in [\![1,d]\!], f\in \mathcal I_n, \,dep\; f\geqslant 2\big),$$
where $p_1,\dots, p_d$ are as in the introduction.
\begin{prop}\begin{eqnarray}
&\mathcal V(J_0)=G.(\nf\times\dots\times\nf),&\notag \\
&\mathcal V(J)=G.(\bb\times\dots\times\bb). &\notag
\end{eqnarray}
\end{prop}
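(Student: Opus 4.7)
The plan is to prove two inclusions for each equality. The easy directions, $G.(\nf\times\dots\times\nf)\subseteq \mathcal V(J_0)$ and $G.(\bb\times\dots\times\bb)\subseteq \mathcal V(J)$, follow from the $G$-invariance of each $p_i\circ f$ (the $p_i$ are $G$-invariant and every $f\in \mathcal I_n$ commutes with the diagonal $G$-action), which lets me reduce to the case $(y_1,\dots,y_n)\in \nf\times\dots\times\nf$ or $\bb\times\dots\times\bb$. Then an iterated bracket $f(y_1,\dots,y_n)$ lies in $\nf$---trivially when $y_i\in\nf$ and via the identity $[\bb,\bb]=\nf$ when $dep\,f\geq 2$---hence is nilpotent and killed by every $p_i$.

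For the reverse inclusion $\mathcal V(J_0)\subseteq G.(\nf\times\dots\times\nf)$, I would pick $(y_1,\dots,y_n)\in \mathcal V(J_0)$ and form the Lie subalgebra $\h\subset\g$ generated by the $y_i$, which coincides with $\langle\mathcal I(y_1,\dots,y_n)\rangle$. The hypothesis says exactly that $\mathcal I(y_1,\dots,y_n)\subset \mathcal V(p_1,\dots,p_d)=\NN$, so every element of this spanning set is $\ad_\g$-nilpotent, hence also $\ad_\h$-nilpotent. Extracting from $\mathcal I(y_1,\dots,y_n)$ a vector space basis $(z_1,\dots,z_m)$ of $\h$, iterated brackets of the $z_j$ still belong to $\mathcal I(y_1,\dots,y_n)$ and are still $\ad$-nilpotent, so this is a very nilpotent basis of $\h$. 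Proposition~\ref{propcentral} then forces $\h$ to be a nilpotent Lie subalgebra of $\g$ all of whose elements are $\ad_\g$-nilpotent; invoking the classical fact that such a subalgebra is contained in the nilradical of some Borel, one gets $\h\subset \Ad(g).\nf$ for some $g\in G$, and therefore $(y_1,\dots,y_n)\in G.(\nf\times\dots\times\nf)$.

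The inclusion $\mathcal V(J)\subseteq G.(\bb\times\dots\times\bb)$ follows from the same scheme run on $[\h,\h]$ instead of $\h$: the derived subalgebra $[\h,\h]$ is spanned by the elements $f(y_1,\dots,y_n)$ with $dep\, f\geq 2$, which are nilpotent by hypothesis, so the same basis-extraction yields a very nilpotent basis of $[\h,\h]$. Proposition~\ref{propcentral} then makes $[\h,\h]$ nilpotent, hence $\h$ solvable, hence contained in some Borel subalgebra of $\g$ after $G$-conjugation, i.e.\ $(y_1,\dots,y_n)\in G.(\bb\times\dots\times\bb)$.

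The real content is Proposition~\ref{propcentral}; the rest of the argument merely converts its intrinsic Lie-algebraic conclusion into a $G$-orbit statement. The only nontrivial inputs needed for that conversion are two classical structure theorems on a semisimple $\g$---every subalgebra of ad-nilpotent elements lies in the nilradical of some Borel, and every solvable subalgebra lies in some Borel (see, e.g., \cite{TY}). These are where I would expect a careful reader to pause, but neither is at the level of a genuine obstacle.
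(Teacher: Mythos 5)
Your proposal is correct and follows essentially the same route as the paper: the easy inclusions by $G$-invariance, and the reverse inclusions by taking the subalgebra generated by the $y_i$ (resp.\ its derived algebra for $J$), extracting a very nilpotent basis from the iterated brackets, applying Proposition~\ref{propcentral}, and then conjugating into $\nf$ (resp.\ $\bb$) by the classical structure facts. Your write-up is in fact slightly more explicit than the paper's (notably on $[\h,\h]$ being spanned by depth~$\geq 2$ brackets and on $\ad_\g$- versus $\ad_\h$-nilpotency), but the argument is the same.
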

\begin{proof}
The inclusions
$\mathcal V(J_0)\supset G.(\nf\times\dots\times\nf)$ and $\mathcal V(J)\supset G.(\bb\times\dots\times\bb)$ are straightforward.

Let us prove the reverse inclusions.
Let $(y_1,\dots y_n)\in \g$ and let $\kk\subset \g$ be the Lie subalgebra generated by the elements $(y_1,\dots,y_n)$. Assume that $(y_1,\dots,y_n)\subset \mathcal V(J_0)$.
Choose a basis of $\kk$, $(z_1,\dots, z_p)\in (\mathcal I(y_1,\dots y_n))^p$. Then $(z_1,\dots, z_p)$ is a very nilpotent basis of $\kk$. It follows from Proposition \ref{propcentral} that $\kk$ is nilpotent. Hence there exists $g\in G$ such that $g.\nf\supset\kk$ and $(y_1,\dots,y_n)\in g.(\nf\times\dots\times\nf)$.

Assume now that $(y_1,\dots,y_n)\in \mathcal V(J)$. Arguing along the same lines, one finds that $[\kk;\kk]$ is nilpotent. Hence $\kk$ is solvable and there exists $g\in G$ such that $g.\bb\supset\kk$.
\end{proof}

The ideals $J_0$ and $J$ are defined by making use of an infinite number of generators. In fact, if one is more careful with the arguments of section \ref{proofcentral}, it is possible to restrict to a finite number. Let us sketch the proof of this.
A  rough estimation shows that, for $(y_1,\dots y_n)\in \g^n $,  the subalgebra $\kk$ generated by $(y_1,\dots y_n)$ is spanned by $\{f(y_1,\dots,y_n)\mid f\in \mathcal I_n,\, dep\;f\leqslant \dim \g\}$ as a vector space. Then, assume that $(z_1,\dots,z_p)$ is a basis of a semisimple Lie algebra $\kk$. One can restrict in the proof of Lemma \ref{mainstep} and Corollary \ref{cormainstep} to the assumption that $f(z_1,\dots,z_p)$ is nilpotent for $dep \;f\leqslant \dim \kk$. Defining
$$J_0:=\big(p_i\circ f\mid dep\; f\leqslant (\dim \g)^2 \big), \quad 
J:=\big(p_i\circ f\mid  2\leqslant dep\; f\leqslant 2(\dim \g)^2\big),$$
we  claim that:
\begin{claim}
$\mathcal V(J')=\mathcal V(J)$ and $\mathcal V(J_0')=\mathcal V(J_0)$.
\end{claim}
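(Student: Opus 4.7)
The plan is to make the argument of Section~\ref{proofcentral} quantitative by tracking the depth of iterated brackets at each step, and to combine this with the observation that a finitely generated Lie subalgebra is spanned by iterated brackets of bounded depth. The inclusions of the form $\mathcal V(\text{finite ideal}) \supset \mathcal V(\text{infinite ideal})$ are trivial since the finite (redefined) ideals are contained in the original infinite ones; only the reverse inclusions need proof.

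The first preliminary is a generation lemma: for $(y_1, \dots, y_n) \in \g^n$, the Lie subalgebra $\kk$ generated by the $y_j$'s is spanned, as a vector space, by $\{f(y_1, \dots, y_n) \mid f \in \mathcal I_n, \, dep\, f \leqslant \dim \g\}$. Indeed the chain $V_k := \langle f(y) \mid dep\, f \leqslant k\rangle$ stabilizes within $\dim \g$ steps, and once stable $V_k$ is Lie-closed and contains the $y_j$'s, hence equals $\kk$. One thus extracts a basis $(z_1, \dots, z_p)$ of $\kk$ whose elements are iterated brackets of the $y_j$'s of depth $\leqslant \dim \g$.

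The second preliminary is a quantitative refinement of Corollary~\ref{cormainstep}. In the proof of Lemma~\ref{mainstep}, the new basis elements $x_{j,k} = \pr_0([w_j, z_k])$ descend, via the projection $\pr_0$ (which commutes with brackets between elements of $\bigoplus_{i \geqslant 0} \g(h,i)$), from brackets of depth $i_0 + 2$ in the $z_i$'s; hence an iterated bracket of the $x_{j,k}$'s of depth $B$ is the $\pr_0$-image of an iterated bracket of the $z_i$'s of depth $\leqslant B + i_0 + 1 \leqslant B + \dim \kk$. Running the induction of Corollary~\ref{cormainstep} along a chain of semisimple subalgebras of strictly decreasing dimensions bounded by $\dim \kk$, the cumulative depth cost is at most $\sum_{d=1}^{\dim \kk} d = \dim \kk (\dim \kk + 1)/2$. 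This yields: a semisimple Lie algebra $\kk$ whose basis has all iterated brackets of depth $\leqslant \dim \kk (\dim \kk + 1)/2$ ad-nilpotent in $\gl(\kk)$ must be $\{0\}$.

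Combining these, for the inclusion associated to $J_0$: given a tuple $(y_1, \dots, y_n)$ with $f(y)$ nilpotent in $\g$ for every $f \in \mathcal I_n$ of depth $\leqslant (\dim\g)^2$, extract a basis $(z_i)$ of $\kk := \langle y_1, \dots, y_n\rangle_{\mathrm{Lie}}$ of $y$-depth $\leqslant \dim \g$. Iterated brackets of the $z_i$'s of depth $\leqslant (\dim\g)^2 - \dim\g + 1$ are iterated brackets of the $y_j$'s of depth $\leqslant (\dim\g)^2$, hence ad-nilpotent in $\gl(\g)$, in $\gl(\kk)$ by restriction, and in $\gl(\kk/\rad\kk)$ after quotient by the $\ad(\kk)$-invariant ideal $\rad\kk$. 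The refined Corollary applied to $\kk/\rad\kk$ (of dimension $\leqslant \dim\g$) forces $\kk/\rad\kk = \{0\}$, so $\kk$ is solvable; Lie's Theorem together with the ad-nilpotency of the $z_i$'s then yields $\kk$ nilpotent as in Proposition~\ref{propcentral}, and the conjugacy argument of Proposition~\ref{propcentral2} gives $(y_1, \dots, y_n) \in G.(\nf \times \cdots \times \nf)$. For the inclusion associated to $J$, one runs the same analysis with $[\kk, \kk]$ in place of $\kk$: a basis of $[\kk, \kk]$ can be chosen among brackets $[f(y), g(y)]$ with $dep\, f, dep\, g \leqslant \dim \g$, of depth between $2$ and $\dim \g + 1$; iterated brackets of such a basis of depth $\leqslant (\dim\g)^2$ correspond to iterated brackets of the $y_j$'s of depth between $2$ and $2(\dim\g)^2$, ad-nilpotent by hypothesis. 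Hence $[\kk, \kk]$ is nilpotent, $\kk$ is solvable, and $(y_1, \dots, y_n) \in G.(\bb \times \cdots \times \bb)$. The main delicate point is the depth bookkeeping: the recurrence $B(d) \leqslant B(d-1) + d$ arising from each application of Lemma~\ref{mainstep} is what produces the quadratic bounds $(\dim\g)^2$ and $2(\dim\g)^2$ announced.
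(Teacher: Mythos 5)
Your proposal is correct and follows essentially the same route as the paper's sketched argument: bounded-depth generation of the subalgebra $\kk$ (depth $\leqslant \dim\g$), a depth-tracking version of Lemma~\ref{mainstep} and Corollary~\ref{cormainstep}, and composition of depths by substitution. The only divergence is quantitative: the paper asserts that depth $\leqslant \dim\kk$ suffices in the semisimple induction, whereas you establish the cruder bound $\dim\kk(\dim\kk+1)/2$, which, as you verify, still fits within the announced budgets $(\dim\g)^2$ and $2(\dim\g)^2$.
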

Of course, those new sets of generators are far from being minimal and are much too big to be implemented. For instance, one can consider couples of elements in $\sld$. Our upper bound on the depth of elements of $J_0'$ is $(3)^{2}=9$ and the number of elements of $\mathcal I_2$ with depth $9$ is greater than $2^{2^8}\simeq 10^{77}$.

%%%%%%%%%%%%%%%%%%%%%%%%%%%%%%%%%%%%%%%%%%%%%%%%%%%
%\vfill
%%%%%%%%%%%%%%%%%%%%%%%%%%%%%%%%%%%%%%%%%%%%%%%%%%%


\begin{thebibliography}{99}
  \bibitem [Ba] {Ba} V.~Baranovsky, The variety of pairs
      of commuting nilpotent matrices is irreducible,
      \emph{Transform. Groups} {\bf6} (2001), 3-8.
\bibitem[CM]{CM} J.~Y.~Charbonnel and A.~Moreau, Nilpotent bicone and characteristic submodule in a reductive Lie algebra, \emph{Transform. Groups} {\bf 14} (2009), 319-360.  
    \bibitem [Gi]{Gi} V.~Ginzburg, Principal nilpotent pairs in a semisimple Lie algebra I., 
      \emph{Invent. Math.} {\bf 140} (2000), 511-561.

\bibitem [Pr] {Pr} A.~Premet, Nilpotent commuting varieties of reductive Lie algebras, \emph{Invent. Math.} {\bf154} (2003), 653-683.
    \bibitem[TY]{TY} P.~Tauvel and R.~W.~T.~Yu, \emph{Lie
        algebras and algebraic groups}, Springer Monographs
      in Mathematics, Springer-Verlag, 2005.
  \end{thebibliography}
\end{document}